\newtheorem{theorem}{Theorem}[section]
\newtheorem{lemma}[theorem]{Lemma}
\newtheorem{corollary}[theorem]{Corollary}
\newtheorem{claim}{Claim}
\newtheorem{conjecture}[theorem]{Conjecture}
\newenvironment{proofc}{{\noindent \textit{Proof of Claim.}}}{\hfill $\Box$\bigskip}
\newcommand{\jm}[1]{#1}
\newcommand{\am}[1]{#1}
\newcommand{\mdv}[1]{#1}
\begin{document}

\title{Non-monochromatic Triangles in a 2-Edge-Coloured Graph}

\author{
   Matt DeVos \thanks{\jm{Department of Mathematics, Simon Fraser University, Burnaby, B.C., Canada V5A 1S6, mdevos@sfu.ca.
     Supported in part by an NSERC Discovery Grant (Canada).}}
\and
   Jessica McDonald\thanks{\jm{Department of Mathematics and Statistics, Auburn University, Auburn, AL, USA 36849, mcdonald@auburn.edu.
   Supported in part by NSF grant DMS-1600551.}}
\and
   Amanda Montejano\thanks{\jm{UMDI Facultad de Ciencias, UNAM Juriquilla, Quer\'{e}taro, Mexico, amandamontejano@ciencias.unam.mx.
   \am{Supported  by CONACyT 219827, DGAPA: PASPA and PAPIIT IN14016.}}}
}

\date{}

\maketitle

\begin{abstract}
Let $G = (V,E)$ be a simple graph and let $\{R,B\}$ be a partition of $E$.  We prove that whenever $|E| + \min\{ |R|, |B| \} > { |V| \choose 2 }$, there exists a subgraph of $G$ isomorphic to $K_3$ which contains edges from both $R$ and $B$.  We conjecture a natural generalization to partitions with more blocks.
\end{abstract}

\section{Introduction}
Throughout all graphs are assumed to be simple.  A \emph{triangle} in a graph $G$ is a subgraph of $G$ isomorphic to a complete graph on 3 vertices.  One of the first theorems in extremal graph theory is the following best possible result.

\begin{theorem}[Mantel \jm{\cite{Ma}}]\label{Ma}
If $G = (V,E)$ satisfies $|E| > \frac{1}{4}|V|^2$, then $G$ contains a triangle.
\end{theorem}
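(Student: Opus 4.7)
The plan is to prove the contrapositive: in any triangle-free graph $G$ on $n = |V|$ vertices, $|E| \leq n^2/4$. The key local observation is that for every edge $uv \in E$, the neighbourhoods $N(u)$ and $N(v)$ are disjoint, since a common neighbour would complete a triangle with $u$ and $v$; hence $d(u) + d(v) = |N(u) \cup N(v)| \leq n$.

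I would then sum this inequality over all edges, obtaining $\sum_{uv \in E}(d(u) + d(v)) \leq n|E|$. The left side equals $\sum_{v \in V} d(v)^2$, because each vertex $v$ contributes $d(v)$ once for each of its $d(v)$ incident edges. The Cauchy-Schwarz inequality then gives $\sum_v d(v)^2 \geq (\sum_v d(v))^2/n = 4|E|^2/n$, using the handshake lemma $\sum_v d(v) = 2|E|$. Combining the two bounds yields $4|E|^2/n \leq n|E|$, and dividing by $|E|$ gives $|E| \leq n^2/4$, so a graph with $|E| > n^2/4$ must contain a triangle.

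There is no serious obstacle; the only creative step is recognising that Cauchy-Schwarz applies to the degree sequence, which is natural once one has $\sum_v d(v)^2 \leq n|E|$. An alternative route is induction on $n$: pick an edge $uv$ and delete both endpoints; since the number of edges incident to $u$ or $v$ is $d(u) + d(v) - 1 \leq n - 1$, the inductive bound on $G - \{u, v\}$ gives $|E| \leq (n-2)^2/4 + (n-1) = n^2/4$. Both approaches achieve the tight bound, attained by the balanced complete bipartite graph $K_{\lfloor n/2 \rfloor, \lceil n/2 \rceil}$.
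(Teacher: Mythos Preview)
Your proof is correct --- indeed both arguments you give are standard and complete proofs of Mantel's theorem. The one small caveat is that dividing by $|E|$ in the final step of the Cauchy--Schwarz argument assumes $|E| > 0$, but the case $|E| = 0$ is trivial.

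There is nothing to compare against here: the paper does not prove Mantel's theorem but merely quotes it as a classical result with a citation. It appears in the introduction only as background motivation for the paper's own extremal question about non-monochromatic triangles, so no proof was expected or given.
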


In this paper we are interested in graphs equipped with edge colourings.  We define a $k$-\emph{edge-coloured} graph to be a graph $G = (V,E)$ equipped with a disjoint union $E = \sqcup_{i=1}^k E_i$ called a \emph{colouring}.   Every subgraph $H \subseteq G$ we will view as a $k$-edge-coloured graph equipped with the colouring $E(H) = \sqcup_{i=1}^k \left(E_i \cap E(H)\right)$.  We say that $G$ is \emph{monochromatic} if there exists $1 \le i \le k$ so that $E = E_i$, and otherwise it is \emph{non-monochromatic}.  If $|E_i| \le 1$ holds for every $1 \le i \le k$ we call $G$ \emph{rainbow}.

One of the central problems in Ramsey Theory is to determine for every $k$ and $t$ the smallest integer $n$ so that every $k$-edge-coloured $K_n$ contains a complete subgraph of order $t$ which is monochromatic.  On the flip side, problems \jm{in} Anti-Ramsey Theory are concerned with conditions forcing the existence of rainbow subgraphs. \jm{See \cite{CFS} and \cite{FMO} for recent surveys of Ramsey Theory and Anti-Ramsey theory, respectively.} Here it is worth noting that Gallai has given a precise description of all possible edge-colourings of a complete graph that do not have a rainbow triangle.   Our problem is another variation on these themes.  Here we are focused on conditions on an arbitrary graph forcing the existence of a non-monochromatic triangle.  We prove the following \mdv{theorem and immediate corollary:}

\begin{theorem}
\label{main1}
If $G = (V,E)$ is a 2-edge-coloured graph with colouring $E = E_1 \sqcup E_2$, and
$|E| + \min\{ |E_1|, |E_2| \} > {|V| \choose 2}$, then $G$ contains a non-monochromatic triangle.
\end{theorem}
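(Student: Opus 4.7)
The plan is to prove the contrapositive by induction on $n=|V|$: if $G$ is $2$-edge-coloured with no non-monochromatic triangle, then $|E|+\min(|E_1|,|E_2|)\le\binom{n}{2}$. Base cases $n\le 3$ are immediate. For the inductive step, I assume without loss of generality that $|E_1|\le|E_2|$ and rename them $R$ and $B$; the goal is to show $|E|+|R|\le\binom{n}{2}$.

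The key structural observation is that for each vertex $v$, its red and blue neighbourhoods $R_v,B_v$ are $G$-non-adjacent. Indeed, if $u\in R_v$, $w\in B_v$, and $uw\in E$, then the triangle $\{u,v,w\}$ contains the red edge $uv$ and the blue edge $vw$, making it non-monochromatic regardless of the colour of $uw$. As a consequence, for each blue edge $uv$ the $r(u)$ red neighbours of $u$ are non-neighbours of $v$, giving $r(u)+d(v)\le n-1$ and symmetrically $r(v)+d(u)\le n-1$; summing these, $(d(u)+r(u))+(d(v)+r(v))\le 2(n-1)$ for every blue edge, with an analogous inequality (with $b$ in place of $r$) on each red edge.

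I next locate a vertex $v$ with $d(v)+r(v)\le n-1$: otherwise every vertex satisfies $d(v)+r(v)\ge n$, and evaluating the displayed inequality on any blue edge (one exists, since $|R|\ge 1$ forces $|B|\ge |R|\ge 1$) yields the contradiction $2n\le 2(n-1)$. Applying the inductive hypothesis to $G':=G-v$ gives $|E(G')|+\min(|R(G')|,|B(G')|)\le\binom{n-1}{2}$. In the case $|R(G')|\le|B(G')|$ — equivalently, $b(v)-r(v)\le|B|-|R|$ — this yields
\[
|E|+|R|\;=\;|E(G')|+|R(G')|+d(v)+r(v)\;\le\;\binom{n-1}{2}+(n-1)\;=\;\binom{n}{2},
\]
completing the induction in this case.

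The main obstacle is the complementary case, in which every vertex $v$ with $d(v)+r(v)\le n-1$ satisfies $b(v)-r(v)>|B|-|R|$, so that removing $v$ flips the minority colour and the direct reduction fails. My plan here is first to attempt the symmetric reduction, with the roles of $R$ and $B$ exchanged: by the red-edge analogue of the existence argument there is a vertex $v'$ with $d(v')+b(v')\le n-1$, and if $b(v')-r(v')\ge|B|-|R|$ for some such $v'$ (so $B$ remains the minority in $G-v'$), the symmetric computation yields $|E|+|B|\le\binom{n}{2}$ and hence $|E|+|R|\le\binom{n}{2}$. If neither reduction applies, the blue- and red-edge inequalities combined with the failure of both reductions force a rigid two-block partition $V=V_A\sqcup V_B$ in which every edge internal to $V_A$ is red and every edge internal to $V_B$ is blue, with cross-edges further constrained by the structural observation; a direct count of edges against non-edges within this rigid structure then contradicts the hypothesis $|E|+|R|>\binom{n}{2}$. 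Closing this final rigid-structure case is the most delicate step.
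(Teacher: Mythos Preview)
Your induction-by-vertex-deletion approach is genuinely different from the paper's proof, which instead takes a maximum packing of vertex-disjoint ``seagulls'' (non-monochromatic $2$-edge paths), analyses edges between seagulls and into the complement, and uses a density lemma to control the monochromatic remainder. Your key structural observation --- that $R_v$ and $B_v$ are nonadjacent --- is correct, and the easy cases of your induction (where deleting a suitable vertex preserves the minority colour) are handled cleanly.

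However, the final case is a genuine gap, not just a delicate step. First, your claim that the hard case forces a \emph{two}-block partition is incorrect: there can be a third block $V_C$ consisting of vertices $v$ with both $d(v)+r(v)\ge n$ and $d(v)+b(v)\ge n$ (hence $d(v)\ge \tfrac{2}{3}n$); your own argument only shows $V_A\cap V_B=\varnothing$, and from the edge inequalities one deduces that $V_C$ is independent, edges inside $V_A$ are blue, edges inside $V_B$ are red, and $V_C$--$V_A$ (resp.\ $V_C$--$V_B$) edges are blue (resp.\ red). Second, the extremal graphs $H_m$ (with $V_A=X_2$, $V_B=X_1$, $V_C=X_0$) sit squarely inside this hard case for all $m\ge 2$: every $v\in V_A$ has $b(v)-r(v)=2m-1>0=\Delta$ and every $v'\in V_B$ has $b(v')-r(v')=-(2m-1)<\Delta$, so both of your reductions fail on every candidate vertex, and $|E|+|R|=\binom{n}{2}$ is achieved with equality. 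So the ``direct count'' you promise must be tight on an infinite family, which means it cannot be a crude bound and must exploit more structure than you have stated --- in particular, the interaction between the $V_A$--$V_B$ edges and the nonadjacency constraint you observed. This is where the real content of the theorem lies, and it is not supplied.
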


\mdv{\begin{corollary} \label{maincor} If $G = (V,E)$ is a 2-edge-coloured graph with colouring $E= E_1 \sqcup E_2$ and
$|E_1|, |E_2| > \frac{1}{3}{ |V| \choose 2}$, then $G$ contains a non-monochromatic triangle.
\end{corollary}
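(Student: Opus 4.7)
The plan is to derive Corollary~\ref{maincor} as a direct arithmetic consequence of Theorem~\ref{main1}. Given the hypothesis that $|E_1| > \frac{1}{3}\binom{|V|}{2}$ and $|E_2| > \frac{1}{3}\binom{|V|}{2}$, adding the two inequalities yields $|E| = |E_1| + |E_2| > \frac{2}{3}\binom{|V|}{2}$, while $\min\{|E_1|, |E_2|\} > \frac{1}{3}\binom{|V|}{2}$ by hypothesis. Summing these gives $|E| + \min\{|E_1|, |E_2|\} > \binom{|V|}{2}$, which is precisely the hypothesis of Theorem~\ref{main1}.

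Applying Theorem~\ref{main1} then produces the desired non-monochromatic triangle, so the proof is complete in one line once the arithmetic above is noted. There is no genuine obstacle here: the entire combinatorial content is already packaged inside Theorem~\ref{main1}, and the corollary merely records the symmetric consequence obtained when both colour classes are individually large, rather than when one class compensates for a small partner via the combined bound $|E| + \min\{|E_1|, |E_2|\}$. In particular, the threshold $\frac{1}{3}\binom{|V|}{2}$ is exactly the symmetric point at which Theorem~\ref{main1}'s hypothesis just holds when $|E_1|=|E_2|$, so Corollary~\ref{maincor} is as tight as Theorem~\ref{main1} itself on the diagonal.
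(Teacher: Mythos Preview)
Your proof is correct and matches the paper's approach: the paper simply calls Corollary~\ref{maincor} an ``immediate corollary'' of Theorem~\ref{main1}, and the arithmetic you carry out ($|E_1|+|E_2|>\tfrac{2}{3}\binom{|V|}{2}$ together with $\min\{|E_1|,|E_2|\}>\tfrac{1}{3}\binom{|V|}{2}$) is exactly the intended one-line deduction.
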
}

\mdv{Theorem \ref{main1}} is tight in the sense that there exist graphs missing the bound by one edge for which there is no non-monochromatic triangle (i.e. all triangles are monochromatic).  We have characterized these graphs and we introduce them next.  For this discussion, we need to extend the usual notion of isomorphism to coloured graphs:  If $G=(V,E)$ and $G' = (V',E')$ are $k$-edge-coloured graphs relative to the colourings $E = \sqcup_{i=1}^k E_i$ and $E' = \sqcup_{i=1}^k E_i'$, then we say that $G$ and $G'$ are \emph{isomorphic} if there is an isomorphism from $G$ to $G'$ that maps every $E_i$ to some $E_j'$.

For every positive integer $m$, define the 2-edge-coloured graph $H_m$ as follows.  We let $V(H_m) = \sqcup_{i=0}^2 X_i$ where $|X_i| = m$ for $0 \le i \le 2$.  We add to $H_m$ all edges between $X_0$ and $X_1 \cup X_2$ and add a clique on $X_1$ and a clique on $X_2$.  We equip $H_m$ with a 2-edge-colouring by defining $E_i$ to be all edges incident with a vertex in $X_i$ for $i=1,2$.  We let $H_m^+$ be the 2-edge-coloured graph defined exactly as above with the exception that $|X_0| = m+1$ \jm{(see Figure \ref{HmPlus})}.  As we prove, the only cases where $|E| + \min\{ \am{|E_1|, |E_2|} \} = { |V| \choose 2}$ for which no non-monochromatic triangle exists are:
\begin{enumerate}
\item $G$ is a monochromatic complete graph.
\item $G$ is a 4-cycle and $E_i$ is a perfect matching for $i=1,2$
\item $G$ is isomorphic to either $H_m$ or $H_m^+$ for some $m \ge 1$.
\end{enumerate}

\begin{figure}
  \centering
  \includegraphics[width=280pt]{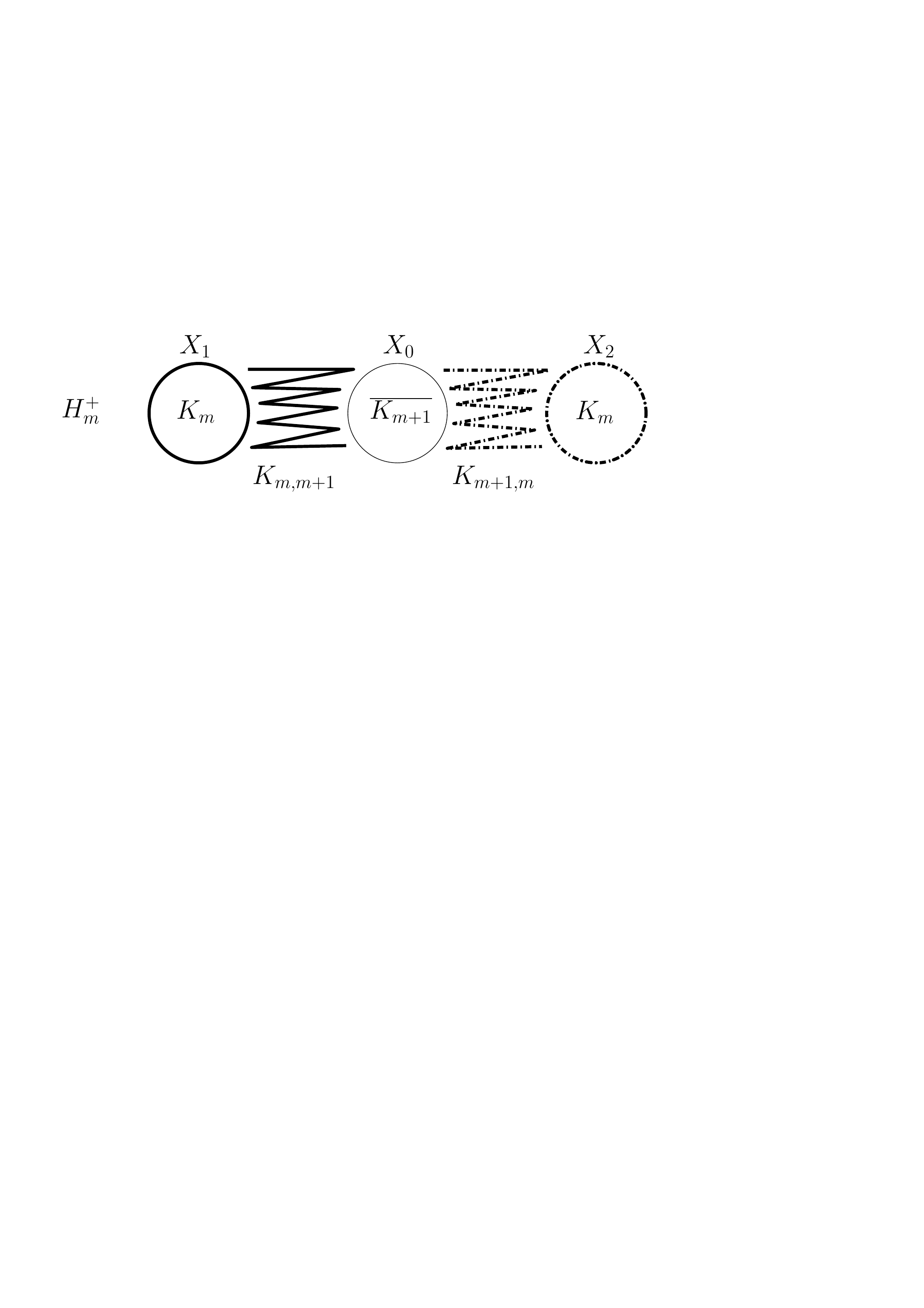}
  \caption{\jm{The graph $H_m^+$, with two colour classes indicated by the bold solid lines and bold dashed-dotted lines. Obtain $H_m$ from $H_m^+$ by deleting a vertex in $X_0$.}}
  \label{HmPlus}
\end{figure}

\mdv{Note that apart from monochromatic complete graphs, these examples satisfy $|E_1| = |E_2| = \frac{1}{3} {|V| \choose 2}$ showing that Corollary \ref{maincor} is also tight. }

%

We conjecture the following many colour variation\mdv{s} to Theorem \ref{main1} \mdv{and Corollary~\ref{maincor}:}

\am{\begin{conjecture}\label{cjt0}
If $G = (V,E)$ is a $k$-edge-coloured graph with colouring $E = \sqcup_{i=1}^k E_i$ and
$2|E|-\max\{|E_i|\,:\,1\leq i\leq k\}> \frac{1}{2} |V|^2$ then $G$ contains a non-monochromatic triangle.
\end{conjecture}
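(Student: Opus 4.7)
The plan is to argue the contrapositive: assume $G=(V,E)$ is $k$-edge-coloured with no non-monochromatic triangle and deduce $2|E|-\max_i|E_i|\leq\frac{1}{2}|V|^2$. Write $n=|V|$, $e=|E|$, $m_i=|E_i|$, and WLOG $m_1\geq m_2\geq\cdots\geq m_k$. The key observation is that \emph{coarsening the colouring preserves the property that every triangle is monochromatic}: for any $S\subseteq\{1,\ldots,k\}$, every triangle of $G$ remains monochromatic in the 2-colouring $\bigl(\bigcup_{i\in S}E_i,\,\bigcup_{i\notin S}E_i\bigr)$, because it was already monochromatic in the finer $k$-colouring. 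Theorem~\ref{main1} then applies to every such coarsening, giving
\[
e+\min\Bigl\{\sum_{i\in S}m_i,\ \sum_{i\notin S}m_i\Bigr\}\leq\binom{n}{2}
\]
for every $S\subseteq\{1,\ldots,k\}$; call this inequality $(\star)$.

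If the largest colour class dominates (\emph{the dominant case}, $m_1\geq e/2$), then taking $S=\{1\}$ in $(\star)$ gives $\min=e-m_1$, so $2e-m_1\leq\binom{n}{2}<\tfrac{1}{2}n^2$, and we are done. Otherwise we are in the \emph{balanced case} $m_1<e/2$, where every colour class is small. A standard greedy partition (process the $m_i$ in decreasing order into the currently smaller bin) produces an $S$ with $\min\{|R_S|,|B_S|\}\geq(e-m_1)/2$; substituting into $(\star)$ yields $3e-m_1\leq 2\binom{n}{2}<n^2$, equivalently $2e-m_1\leq n^2-e$. Since $e\leq\binom{n}{2}<\tfrac{1}{2}n^2$, this is strictly weaker than the target $\tfrac{1}{2}n^2$; for example, a configuration with $e\approx 0.7\binom{n}{2}$ and $m_1\approx 0.3\binom{n}{2}$ would satisfy every consequence of $(\star)$ yet give $2e-m_1\approx 0.55\,n^2>\tfrac{1}{2}n^2$. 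Hence Theorem~\ref{main1}, even applied to every coarsening of the colouring, cannot settle the conjecture on its own.

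The main obstacle is therefore the balanced case, where one needs an additional structural input specific to graphs with all triangles monochromatic. I would pursue the following analysis. For each colour $i$, partition $V$ into the connected components $C^i_1,\ldots,C^i_{r_i}$ of $G_i:=(V,E_i)$. Monochromaticity of triangles forces two structural facts: (a) every $G$-edge inside a component $C^i_a$ is of colour $i$, so edges of any other colour go strictly between distinct components of $G_i$; and (b) if $u,u'\in C^i_a$ with $uu'\in E$ and $v\in C^i_b$ with $b\neq a$, then at most one of $uv,u'v$ lies in $E$, since otherwise the monochromatic triangle $uu'v$ would be of colour $i$, forcing $v\in C^i_a$. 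Overlaying these ``cherry-free'' constraints across all $k$ colours should cap the total edge count tightly enough to yield the conjectured bound; alternatively, one may invoke a stability version of Theorem~\ref{main1} (via the extremal configurations $H_m$, $H_m^+$, monochromatic complete graphs, and the 4-cycle with perfect-matching colouring) to show that near-tightness of $(\star)$ for several different coarsenings $S$ simultaneously pins the $k$-colouring down to something essentially 2-coloured, at which point Theorem~\ref{main1} applies directly. I expect constraint (b), combined with induction on the coarsest non-trivial $G_i$-partition, to be the crux of the argument, and also the most likely place for it to stall.
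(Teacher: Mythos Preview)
This statement is presented in the paper as a \emph{conjecture}, not a theorem: the authors pose it as open and give no proof. So there is no argument in the paper to compare yours against.

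Your analysis is sound as far as it goes. The coarsening observation is correct, and the dominant case $m_1\ge e/2$ really does follow from Theorem~\ref{main1} exactly as you say: with $S=\{1\}$ one has $\min\{m_1,e-m_1\}=e-m_1$, whence $2e-m_1\le\binom{n}{2}<\tfrac12 n^2$. You are also right that the balanced case cannot be closed by the inequalities $(\star)$ alone; your numerical sketch (e.g.\ three colour classes of sizes roughly $0.3$, $0.2$, $0.2$ times $\binom{n}{2}$, so $e\approx 0.7\binom{n}{2}$) does satisfy every coarsening inequality while giving $2e-m_1\approx 1.1\binom{n}{2}>\tfrac12 n^2$, so genuinely new structural input is required.

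The remainder of your proposal is, as you yourself acknowledge, a sketch of directions rather than a proof. The component and ``cherry-free'' constraints you list are valid consequences of every triangle being monochromatic, but you have not shown that overlaying them across all colours forces $2e-m_1\le\tfrac12 n^2$, nor that a stability version of Theorem~\ref{main2} can be pushed through. These are plausible lines of attack, but they remain speculative; to the paper's knowledge the conjecture is open. Note also the paper's remark after Conjecture~\ref{cjt} that the natural strengthening with $\binom{|V|}{2}$ in place of $\tfrac12|V|^2$ already fails for a rainbow $4$-cycle, so any argument must genuinely use the extra slack in the $\tfrac12|V|^2$ bound.
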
}

\begin{conjecture}\label{cjt}
If $G = (V,E)$ is a $k$-edge-coloured graph with colouring $E = \sqcup_{i=1}^k E_i$ and
$|E_i| \mdv{>} \frac{1}{4k-2}|V|^2$ holds for $1 \le i \le k$, then $G$ contains a non-monochromatic triangle.
\end{conjecture}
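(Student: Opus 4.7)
The overall plan is induction on $k$, taking Corollary~\ref{maincor} as the base case $k=2$ (its hypothesis is weaker than Conjecture~\ref{cjt}'s in that case, so the base reduces directly). For the inductive step, suppose $G=(V,E)$ is $k$-edge-coloured with $|E_i|>\frac{|V|^2}{4k-2}$ for every $i$, and assume for contradiction that every triangle of $G$ is monochromatic.

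The first step is to record the rigid local structure this forces. For each vertex $v$ and each colour $i$, let $N_i(v)$ denote the set of neighbours of $v$ joined to $v$ by a colour-$i$ edge. Monochromaticity of the triangles through $v$ immediately gives two facts: for distinct colours $i\ne j$ there is no edge of $G$ between $N_i(v)$ and $N_j(v)$, and every edge inside $N_i(v)$ must itself have colour $i$. Thus the ``link'' at every vertex $v$ is a disjoint union of $k$ monochromatic pieces, one per colour.

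A natural second step is to apply Theorem~\ref{main1} to each 2-coloured subgraph $(V,E_i\cup E_j)$, giving
\[
|E_i|+|E_j|+\min(|E_i|,|E_j|) \le \binom{|V|}{2}\qquad \text{for all }\ i\ne j,
\]
and to sum these inequalities with weights. A short calculation shows this route is too weak for $k\ge 3$: it yields only a $|V|^2/6$-type bound on each $|E_i|$, which is the 2-colour threshold and is far from $|V|^2/(4k-2)$. So any successful proof must exploit the rigid link structure across three or more colour classes simultaneously. I would do this by fixing a vertex $v$ of maximum degree and writing $a_i=|N_i(v)|$, $a_0=|V|-1-\sum_i a_i$; the no-edges-between-$N_i(v)$ rule then yields
\[
|E| \le \sum_{i=1}^k a_i + \sum_{i=1}^k \binom{a_i}{2} + a_0\sum_{i=1}^k a_i + \binom{a_0}{2},
\]
with analogous refinements bounding each $|E_i|$ separately. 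Optimising these inequalities against the edge-count lower bounds should pinpoint the extremal configuration as the $k$-colour analogue of $H_m$: a blow-up with a hub $X_0$ of size $(k-1)m$, leaves $X_1,\dots,X_k$ of size $m$, and colour $i$ equal to the edges incident to $X_i$. One checks this gives $|V|=(2k-1)m$ and $|E_i|=\frac{(2k-1)m^2-m}{2}$, so that $\frac{1}{4k-2}$ is the exact leading-order threshold and the arithmetic is consistent.

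The main obstacle is that the link constraints at a single vertex leave the edges incident to $A_0$, and the colours of edges inside $A_0$, largely undetermined. To attack this I would iterate the link analysis at a second vertex $v'\in A_0$, refine the partition by intersecting the two link decompositions, and continue until either a contradiction emerges or the graph is forced to be isomorphic to the extremal blow-up described above. Making this iteration quantitative while preserving the sharp constant $\frac{1}{4k-2}$ is almost certainly the crux, and is likely why the statement remains a conjecture in the paper.
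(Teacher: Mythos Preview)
The statement you were asked to prove is labelled a \emph{conjecture} in the paper and is not proved there; the paper establishes only the $k=2$ case (Theorem~\ref{main1} and Corollary~\ref{maincor}) and exhibits the graphs $H_m^k$ as evidence that the constant $\tfrac{1}{4k-2}$ is essentially sharp. So there is no ``paper's own proof'' to compare your attempt against.

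Your proposal is candid about this. You correctly reduce the base case $k=2$ to Corollary~\ref{maincor}, you correctly observe that applying Theorem~\ref{main1} to pairs of colour classes and summing is too weak for $k\ge 3$, you identify the extremal family $H_m^k$ that the paper also highlights, and you close by conceding that making the vertex-link iteration quantitative with the sharp constant is ``likely why the statement remains a conjecture in the paper.'' That concession is accurate. What you have written is a reasonable reconnaissance of the problem, not a proof: the announced induction on $k$ is never actually used (after the base case you pivot to a direct structural argument and never invoke the hypothesis for $k-1$ colours), and the link-optimisation sketch, as you yourself note, does not close. There is no further gap to name beyond the one you have already named---the conjecture is open.
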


If true, \mdv{these conjectures are} \mdv{essentially} \am{best possible} thanks to a generalization of the graph $H_m$.  For positive integers $k \ge 1$ and $m \ge 1$ define the $k$-edge-coloured graph $H_m^k$ as follows.  The vertex set of $H_m^k$ is the disjoint union of sets $X_0, \ldots, X_k$ where $|X_0| = (k-1)m$ and $|X_i| = m$ for $1 \le i \le m$.  Add to $H_m$ all edges between $X_0$ and $\cup_{i=1}^k X_i$ and add a clique on $X_i$ for $1 \le i \le k$.  Equip $H_m^k$ with a $k$-edge-colouring by defining $E_i$ to be all edges incident with a vertex of $X_i$ for $1 \le i \le k$.  Now \mdv{$H_m^k$ does not have a non-monochromatic triangle, but $|V(H_m^k)| = (2k-1)m$ and every $E_i$ satisfies
\[ |E_i| = \mbox{${m \choose 2}$} + (k-1)m^2 \approx (k - \tfrac{1}{2})m^2 = \tfrac{1}{4k-2} |V(H_m^k)|^2. \]
}

%
%
%

\begin{figure}
  \centering
  \includegraphics[width=190pt]{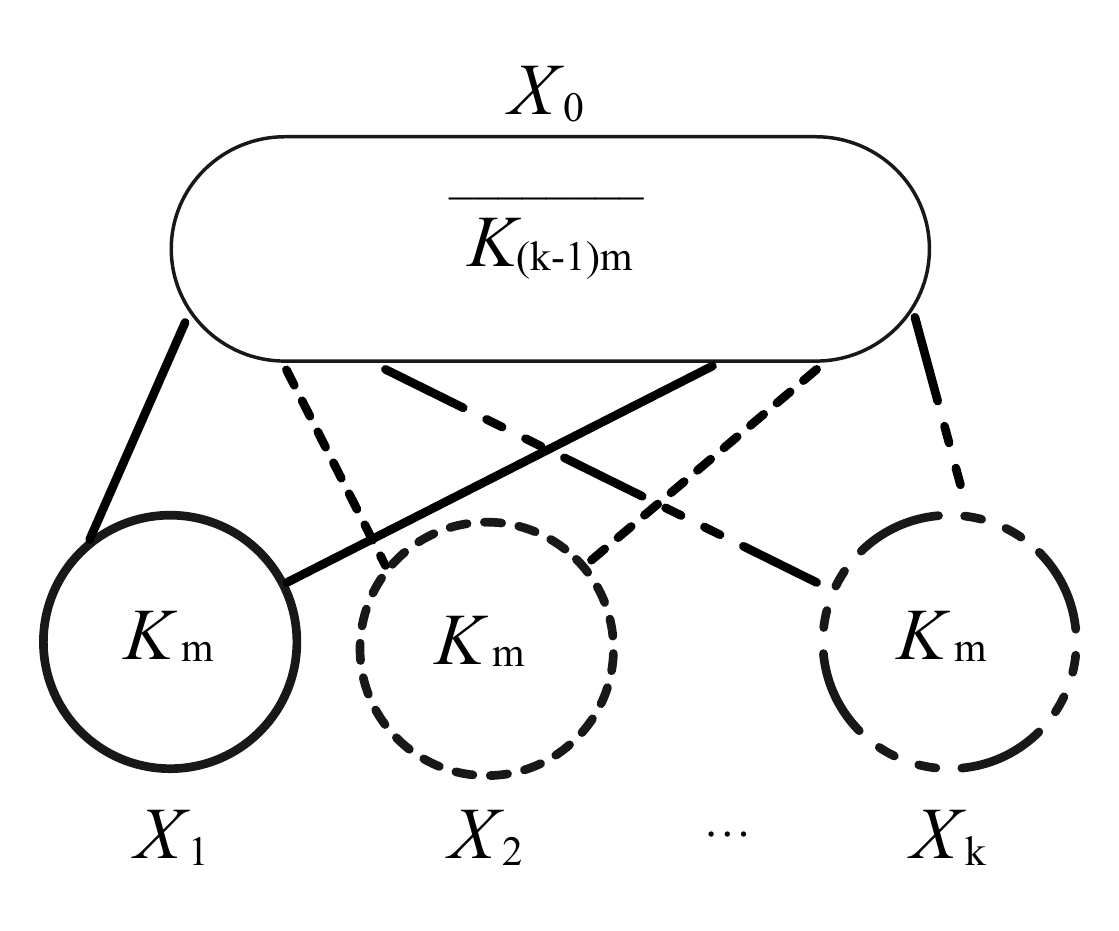}
  \caption{\am{The graph $H_m^k$ with $k$ color clases. It indicates that Conjecture \ref{cjt0} is best possible.}}
  \label{Hkm}
\end{figure}

\mdv{
One might expect a stronger form Conjecture \ref{cjt0} to hold under the weaker assumption $2|E|-\max\{|E_i|\,:\,1\leq i\leq k\}> {|V| \choose 2}$ (indeed we originally did!).  However, Sophie Spirkl noted that a rainbow 4-cycle is a counterexample to this.   }

\section{The Proof}

For a graph $G=(V,E)$ we let $\alpha(G)$ denote the size of the largest independent set.  We define the \emph{density} of $G$ to be:
\[ d(G) = \frac{ |E| }{{ |V| \choose 2 }}. \]
We have introduced the concept of density in part because we find it easiest to think in these terms.  However, it is used somewhat sparingly in the proof because it is usually easier to write out our bounds in terms of edges.  The proof of our main theorem calls on the following straightforward lemma.

\begin{lemma}
\label{thelemma}
If $G$ is a graph with $|V(G)| \ge 2$ and density at least $\frac{2}{3}$, there is a \jm{component $G'$ of $G$ with $|V(G')|>\tfrac{2}{3}|V(G)|$}, and furthermore,
\[ \alpha(G') +1 \le 2 |V(G')| - |V(G)|. \]
\end{lemma}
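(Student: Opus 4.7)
The plan is to set $n = |V(G)|$, let $G'$ be a component of $G$ of maximum size, and write $n' = |V(G')|$ and $b = n - n'$. I will establish the two conclusions separately, each by contradiction together with a careful edge count.

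To force $n' > \tfrac{2n}{3}$, suppose every component of $G$ has at most $\tfrac{2n}{3}$ vertices. Letting $n_1, n_2, \ldots$ denote the component sizes,
\[
|E(G)| \;\le\; \sum_i \binom{n_i}{2} \;=\; \tfrac{1}{2}\Bigl(\sum_i n_i^2 - n\Bigr) \;\le\; \tfrac{1}{2}(n' n - n) \;\le\; \tfrac{n}{2}\bigl(\tfrac{2n}{3} - 1\bigr),
\]
which is strictly less than $\tfrac{n(n-1)}{3}$, contradicting the density hypothesis. Hence a component $G'$ with $n' > \tfrac{2n}{3}$ exists and is unique (since $2 \cdot \tfrac{2n}{3} > n$).

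For the independence bound, suppose for contradiction that $\alpha(G') \ge 2n' - n$. Pick an independent set $I \subseteq V(G')$ of this size and set $c = n' - |I| \le b$. Because $G'$ is a component (no edge leaves $V(G')$) and $I$ is independent, partitioning $E(G)$ into edges outside $V(G')$, edges inside $V(G') \setminus I$, and edges from $I$ to $V(G') \setminus I$ yields
\[
|E(G)| \;\le\; \binom{b}{2} + \binom{c}{2} + (n' - c)c.
\]
As a function of $c$, the right-hand side has derivative $n' - c - \tfrac{1}{2}$, positive on $[0, b]$ since $b < \tfrac{n}{3} < n'$. So the maximum occurs at $c = b$, giving $|E(G)| \le b(n' - 1) = (n - n')(n' - 1)$.

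The remaining step is to verify $(n - n')(n' - 1) < \tfrac{n(n-1)}{3}$ whenever $n' > \tfrac{2n}{3}$ and $n \ge 2$, which together with $|E(G)| \ge \tfrac{n(n-1)}{3}$ closes the argument. The inequality is equivalent to positivity of the quadratic $3n'^2 - 3(n+1)n' + n^2 + 2n$, which completing the square rewrites as $3\bigl(n' - \tfrac{n+1}{2}\bigr)^2 + \tfrac{(n+3)(n-1)}{4}$, manifestly positive for $n \ge 2$. I expect this final algebraic check to be the main (modest) obstacle; everything else is routine edge counting.
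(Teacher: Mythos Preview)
Your proof is correct and follows essentially the same approach as the paper: both arguments bound $|E(G)|$ by $\binom{|V\setminus V(G')|}{2} + \binom{|V(G')\setminus I|}{2} + |I|\cdot|V(G')\setminus I|$ for a large independent set $I$ in $G'$, and then compare against $\tfrac{2}{3}\binom{n}{2}$. The only cosmetic differences are that the paper locates the large component via a maximum-degree vertex rather than your sum-of-squares bound, and it finishes the inequality by exploiting the ordering $|I| \ge |V\setminus V(G')| \ge |V(G')\setminus I|$ directly instead of optimising over $c$ and completing the square.
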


\begin{proof}
Since $G$ must have a vertex with at least $\frac{2}{3}( |V(G)| - 1 )$ neighbours, it follows that there is a component $G'$ of $G$ containing at least $\frac{2}{3} |V(G)|$ vertices.  Define $a = \alpha(G')$, let $b = |V(G')| - a$ and  $c = |V(G)| - |V(G')|$, and note that $|V(G')| \ge \frac{2}{3} |V(G)|$ implies $a + b \ge 2c$.  Also note that $b > 0$ since otherwise the component $G'$ would be forced to be an isolated vertex.  Assume (for a contradiction) that $\alpha(G') \ge 2 |V(G')| - |V(G)| $, and note that this implies $c \ge b$.  Together with the bound $a + b \ge 2c$ we then have $a \ge c \ge b$.  But this gives the contradiction
\[ |E(G)| \le  \mbox{${b \choose 2}$} + \mbox{${c \choose 2}$} + ab  < \tfrac{2}{3} \Big( \mbox{${a \choose 2}$} +  \mbox{${b \choose 2}$} +  \mbox{${c \choose 2}$}   +  ab + ac + bc \Big) = \tfrac{2}{3} \mbox{${|V(G)| \choose 2}$}. \qedhere \]
\end{proof}

We need a little additional notation before proving the main result.  Let $G= (V,E)$ be a 2-edge-coloured graph with colouring $E = R \sqcup B$.  If $X \subseteq V$ we let $G[X]$ denote the 2-edge-coloured subgraph of $G$ induced by $X$.  We let $E(X)$ denote the set of edges with both ends in $X$, and define:
\[ e(X) = |E(X)| \qquad	e_B(X) = |E(X) \cap B| \qquad e_R(X) = |E(X) \cap R|.	\]
If $Y \subseteq V$ is disjoint from $X$, we let $E(X,Y)$ denote the set of edges with one end in $X$ and one end in $Y$.  Define:
\[ e(X,Y) = |E(X,Y)|	\qquad	e_B(X,Y) = |E(X,Y) \cap B|	\qquad	e_R(X,Y) = |E(X,Y) \cap R|. \]

Next we prove our main theorem, restated to allow for $|E| + \min\{ |R|, |B| \} \ge { |V| \choose 2 }$.

\begin{theorem}
\label{main2}
Let $G = (V,E)$ be a 2-edge-coloured graph with colouring $E = R \sqcup B$.  If
$|E| + \min\{ |R|, |B| \} \ge { |V| \choose 2 }$, then one of the following holds:
\begin{enumerate}
\item $G$ has a non-monochromatic triangle.
\item $G$ is a monochromatic clique.
\item $G$ is a 4-cycle and $R$, $B$ are perfect matchings.
\item $G$ is isomorphic to either \am{$H_m$ or $H_m^+$ for some $m \ge 1$}.
\end{enumerate}
\end{theorem}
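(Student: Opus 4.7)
We proceed by strong induction on $n := |V|$, with base cases $n \leq 4$ checked directly. Assuming (else done) that $G$ has no non-monochromatic triangle, and swapping colours if necessary so $|B| \leq |R|$, the hypothesis becomes $|E| + |B| \geq \binom{n}{2}$. Since $|B| \leq |E|/2$, this gives $|E| \geq \tfrac{2}{3}\binom{n}{2}$, so $G$ has density at least $\tfrac{2}{3}$. Throughout we repeatedly use the following \emph{structural observation}, immediate from the triangle hypothesis: for each $v \in V$, $N(v) = R(v) \sqcup B(v)$, with no $G$-edge between $R(v)$ and $B(v)$, with all edges in $G[R(v)]$ red, and all edges in $G[B(v)]$ blue.

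A first sub-task is to dispose of the disconnected case. If $G$ were disconnected, Lemma~\ref{thelemma} gives a component $G'$ with $|V(G')| > \tfrac{2}{3}n$ and remaining components of total size less than $\tfrac{1}{3}n$; a direct edge-count using that disconnected $G$ misses all cross-edges, combined with $|B| \leq |E|/2$, shows $|E| + |B| < \binom{n}{2}$ unless the component decomposition has a very specific form (all small components being isolated vertices and $G'$ essentially a clique), which in turn is ruled out by the no-non-monochromatic-triangle hypothesis except in base cases. So we may assume $G$ is connected.

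The second sub-task is to find a vertex $v$ whose deletion preserves the hypothesis, namely $|E(G-v)| + \min(|R'|,|B'|) \geq \binom{n-1}{2}$. Since $\sum_v (r(v) + 2b(v)) = 2|R| + 4|B| = 2(|E| + |B|) \geq n(n-1)$, averaging produces a vertex $v$ with $r(v) + 2b(v) \leq n-1 + 2\phi/n$, where $\phi := |E| + |B| - \binom{n}{2} \geq 0$ measures the slack. A careful case split on whether the minority colour flips upon deletion---handled by a symmetric calculation with the roles of $R$ and $B$ swapped---shows that such a $v$ preserves the hypothesis. By induction, $G - v$ is one of the four listed extremal types.

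The remaining (and main) step is reconstructing $G$ from $G - v$ and $v$. For each type of $G - v$, the structural observation at $v$ combined with the rigidity of $G - v$ leaves only a few admissible attachments, each yielding one of the four listed types for $G$. For instance, if $G - v$ is a monochromatic red $K_{n-1}$, any two blue neighbours of $v$ would share a red edge in $G-v$ and yield a non-monochromatic triangle; hence $b(v) \leq 1$, and the hypothesis then forces $b(v) = 0$ with $v$ adjacent to all of $G-v$, giving $G = K_n$. If $G - v$ is $H_m$ with parts $X_0, X_1, X_2$, the observation pins $R(v) \subseteq X_0 \cup X_1$ and $B(v) \subseteq X_0 \cup X_2$, and the edge-count constraint then narrows $v$'s attachment to extending $X_0$ (yielding $G \cong H_m^+$) or to joining $X_1$ or $X_2$ (yielding a larger graph of the same family after relabelling); every other attachment either produces a non-monochromatic triangle or violates the hypothesis. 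The sub-case $G-v \cong H_m^+$ is analogous, and the perfect-matching-coloured $C_4$ sub-case arises only for small $n$ and is ruled out by inspection of its possible attachments. The principal obstacle is this exhaustive case analysis---ruling out every non-extremal attachment of $v$ requires simultaneous book-keeping of the red/blue edges from $v$ into each part of $G-v$ against the inflexible structural constraints on $N(v)$.
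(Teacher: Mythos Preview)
Your inductive vertex-deletion strategy has a genuine gap at the key step of producing a vertex $v$ whose removal preserves the hypothesis. The averaging bound $r(v)+2b(v) \le (n-1)+2\phi/n$ controls the relevant quantity only when $B$ remains the minority colour in $G-v$; if the minority flips to $R$, what you need instead is $2r(v)+b(v) \le |E|+|R|-\binom{n-1}{2}$, and the vertex minimising $r+2b$ need not satisfy this. The ``symmetric calculation'' you invoke does not rescue the step, because it would locate a \emph{different} vertex, not certify the one already chosen. Concretely, take $G = H_m$ with $m \ge 2$: here $n=3m$, $|R|=|B|$, and $\phi=0$. A vertex $v \in X_1$ has $(r(v),b(v))=(2m-1,0)$, which attains the minimum $r+2b=2m-1 \le n-1$; but deleting it makes $R$ the minority, and then $2r+b = 4m-2 > 3m-1 = n-1$, so the hypothesis fails for $G-v$. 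For $v \in X_0$ one has $r+2b=3m>n-1$ already, and $v \in X_2$ is symmetric to $X_1$. Thus \emph{no} vertex of $H_m$ can be deleted while preserving the hypothesis. Since $H_m$ is one of the intended outcomes this does not contradict the theorem, but it does show the induction cannot proceed as written: you would need a separate argument characterising precisely those $G$ that admit no deletable vertex, and that is essentially the whole theorem.

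For comparison, the paper's proof is not inductive on $|V|$ in this sense. It fixes a minimum counterexample, takes a maximum packing of vertex-disjoint \emph{seagulls} (non-monochromatic copies of $P_3$), sets $S$ equal to their union and $T = V \setminus S$, and then bounds $e(S)$, $e(S,T)$, $e(T)$ together with their colour refinements directly via local structural claims (no alternating $4$-cycle, at most six edges between two seagulls, etc.). Lemma~\ref{thelemma} is applied only to the monochromatic graph $G[T]$, not to $G$ itself, and no single vertex is ever deleted. Even setting aside the averaging gap, your reconstruction step for $G-v \cong H_m$ or $H_m^+$ is only sketched; carrying it out rigorously would require checking, for every admissible pattern of red and blue edges from $v$ into $X_0,X_1,X_2$, either that a non-monochromatic triangle appears or that the edge-count hypothesis is violated.
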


\begin{proof}
Suppose (for a contradiction) that the theorem is false, and let \am{$G=(V,E)$} be a counterexample with \am{$|V|$} minimum.  Note that this implies $G$ is connected.  \mdv{We call edges in $R$ \emph{red} and edges in $B$ \emph{blue}, and for a vertex $v \in V$ an adjacent vertex is called a \emph{red} (\emph{blue}) neighbour if it is joined to $v$ by a red (blue) edge.}  We say that a subset $X \subseteq V$ with $|X| = 3$ is a \emph{seagull} if $X$ is the vertex set of a non-monochromatic two edge path $P$.  Note that since $G$ does not have a non-monochromatic triangle, the subgraph $P$ must be induced.  We say that a subset $\am{X} \subseteq V$ with $|\am{X}| = 4$ is an \emph{alternating square} if \am{$X$} is the vertex set of a properly coloured 4-cycle $C$ (i.e. every $x \in X$ is incident with one edge in $R \cap E(C)$ and one in $B \cap E(C)$).  Again in this case the subgraph $C$ must be induced.  We begin with a couple of straightforward claims about the behaviour of $G$.

\bigskip

\begin{claim}\label{d23}  $d(G) \ge \frac{2}{3}$ and if $d(G) = \frac{2}{3}$, then $|R| = |B|$.
\end{claim}

\begin{proofc}
This follows from $|E| + \tfrac{1}{2} |E| \ge |E| + \min\{ |R|, |B| \} \ge { |V| \choose 2}$.
\end{proofc}

\begin{claim}\label{seagull} If $\jm{X}$ is a seagull \am{in $G$} and $v \in V \setminus \jm{X}$, then $e(v, \jm{X} ) \le 2$
\end{claim}

\begin{proofc}
\jm{If $v$ has an edge to each of the three vertices in \jm{$X$} then, regardless of their colours, we get a non-monochromatic triangle.}
\end{proofc}

\begin{claim}\label{C4} $G$ does not have an alternating square
\end{claim}

\begin{proofc}
Suppose (for a contradiction) that $X \subseteq V$ is an alternating square.  Note that $X \neq V$ since in this case $G$ satisfies the third outcome of the theorem.  Define $Y = V \setminus X$ and note that by \am{Claim \ref{seagull}}, every $y \in Y$ must satisfy $e(y,X) \le 2$.  In particular, this implies
\begin{equation}\label{XY}
e(X,Y) \le \tfrac{1}{2} |X| |Y|.
\end{equation}
It follows from \am{(\ref{XY})} that
\begin{eqnarray}\label{YVR}
e(X) + e(X,Y) + \max\big\{&&\hspace*{-.4in} e_B(X) + e_B(X,Y), e_R(X) + e_R(X,Y) \big\}\nonumber\\
&\leq& \jm{4+ \tfrac{1}{2} |X| |Y| + (2+\tfrac{1}{2} |X| |Y|) = } \mbox{${ |X| \choose 2 }$} + |X| |Y|
\end{eqnarray}
\jm{We claim that $e(Y) + \min\{ e_R(Y), e_B(Y) \} \geq { |Y| \choose 2 }$. If not, then suppose, without loss of generality, that $e(Y) + e_R(Y) \am{<} { |Y| \choose 2 }$.}  \mdv{Equation (\ref{YVR}) implies that 
$e(X) + e(X,Y) + e_R(X) + e_R(X,Y) \le { |X| \choose 2 } + |X| |Y|$.  Summing this and $e(Y) + e_R(Y) \am{<} { |Y| \choose 2 }$ give us $|E| + |R| < {|V| \choose 2}$ which contradicts our primary assumption.}
\jm{Hence $e(Y) + \min\{ e_R(Y), e_B(Y) \} \am{\geq} { |Y| \choose 2 }$, and by the minimality of $G$, the theorem holds for $G[Y]$.}
\am{Thus,} $G[Y]$ must be either a monochromatic clique, an alternating square, or $G[Y]$ is isomorphic to one of \am{$H_m$ or $H_m^+$ for some $m \ge 1$}.  In all but the first case, the density of $G[Y]$ is equal to $\frac{2}{3}$, \jm{and since the density of $G[X]$ is also $\tfrac{2}{3}$, inequality (\ref{XY}) implies that $d(G)<\tfrac{2}{3}$, contradicting Claim \ref{d23}.}  In the remaining case, $G[Y]$ is a clique and we may assume, \jm{without loss of generality, that} all edges of $G[Y]$ are in $R$.  Since $|E| + |B| \ge { |V| \choose 2 }$ 
it must be that $e(X,Y) = e_B(X,Y) = \frac{1}{2} |X| |Y|$ and every $y \in Y$ is incident with exactly two vertices in $X$.  If $|Y| \ge 3$, then there are two vertices in $Y$ with a common neighbour in $X$ and this triple forms a non-monochromatic triangle---a contradiction.  Otherwise, $1 \le |Y| \le 2$ and $e_R(X,Y) = 0$ give us a contradiction to the $|E| + |R| < { |V| \choose 2}$.
\end{proofc}

\begin{claim}\label{S1S2}
Let $S_1 , S_2$ be disjoint seagulls \am{in $G$. Let } $r = e_R(S_1,S_2)$ and $b = e_B(S_1,S_2)$, then
\begin{itemize}
\item $r + b \le 6$,
\item if $r+b = 6$ then $G[S_1 \cup S_2]$ is isomorphic to $H_2$, and
\item $r + b + \max \{ r, b \} \le 9$.
\end{itemize}
\end{claim}

\begin{proofc}
\jm{By Claim \ref{seagull}, there are at most 2 edges between a seagull and any other vertex, so in particular there are at most 6 edges between $S_1$ and $S_2$.} \am{If  $r+b = 6$ then each vertex in $S_1$  has exactly 2 neighbours in $S_2$.} \jm{Label the vertices of $S_1, S_2$ as $u, v, w$, $x, y, z$, respectively, with the edges $uv$ and $xy$ being in $R$, and the edges $vw$ and $yz$ being in $B$.} \am{Observe that  a blue  neighbor of $v$ cannot be a neighbor of $u$, and a red  neighbor of $v$  cannot be a neighbor of $w$. Hence, if each of  $u, v$ and $w$ has exactly 2 neighbours in $S_2$, it must be that $v$ has exactly one red neighbor  and one blue neighbor in $S_2$. Moreover, the red neighbour of $v$ in $S_2$ must be $x$ and the blue neighbour of $v$ in $S_2$ must be $z$ (otherwise, $v,x,y,z$ would induce alternating square contradicting Claim \ref{C4}).  This force  $G[S_1 \cup S_2]$ to be $H_2$ (see Figure \ref{TwoSeagulls3}). Note that the third instance of  the claim is satisfied in this case. It remains to prove the third instance of  the claim when $r+b\leq 5$. For that, it suffices to show that $\max\{r, b\} \le 4$. Suppose (for a contradiction, and without loss of generality) that $b=5$. Then, there are at least three $B$-edges between $\{u,v\}$ and $S_2$. Avoiding the existence of non-monochromatic triangles, the two possibilities for this to happen are depicted in Figure \ref{SeagullEdgeCases2}. In either case, we have created an alternating square,  a contradiction to Claim \ref{C4}.}
\end{proofc}

\begin{figure}
  \centering
  \includegraphics[width=350pt]{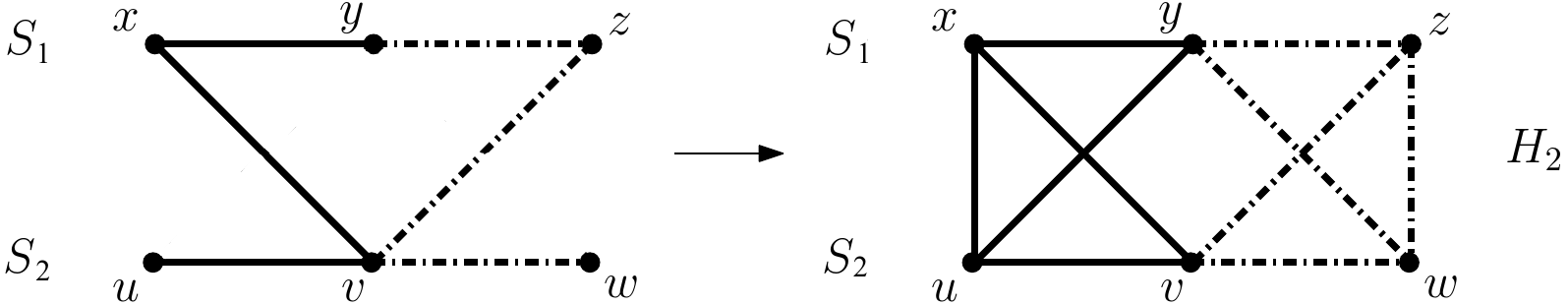}
  \caption{\jm{Forcing a copy of $H_2$ in the proof of Claim \ref{S1S2}. Here and elsewhere in the paper, solid bold lines indicate colour class $R$, and dashed-dotted bold lines indicate colour class $B$.}}
  \label{TwoSeagulls3}
\end{figure}

\begin{figure}
  \centering
  \includegraphics[width=300pt]{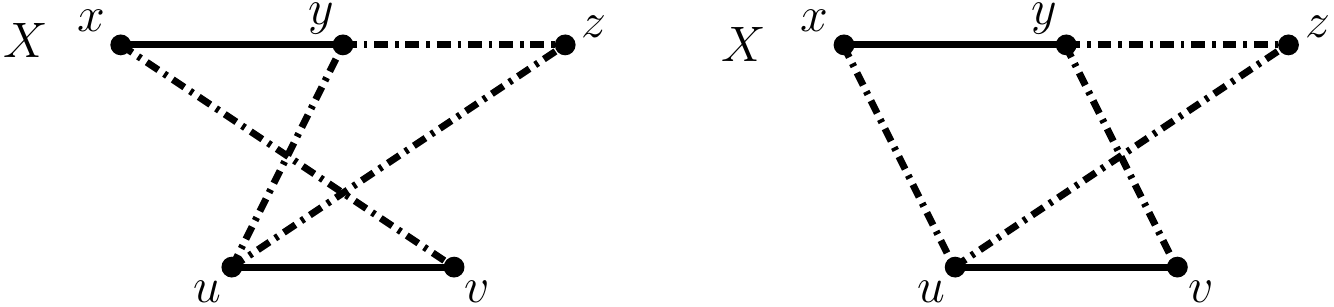}
  \caption{\jm{The two  cases to consider in Claim \ref{S1S2} \am{when we assume that $b=5$}.}}
  \label{SeagullEdgeCases2}
\end{figure}

Choose a maximum size list of vertex disjoint seagulls $S_1, \ldots, S_m$ \am{in $G$} and define the sets $S = \cup_{i=1}^m S_i$ and $T = V \setminus S$. \jm{By Claim \ref{seagull}, each vertex in $T$ is adjacent to at most two of the three vertices in each seagull $S_i$.} \am{Hence,}

\begin{equation}\label{ST}
e(S,T) \le \tfrac{2}{3} |S| |T|.
\end{equation}

\begin{claim}\label{S} \jm{The set $S$ satisfies
\begin{equation}\label{Seq}
\tfrac{3}{2} e(S) \le e(S) + \max\big\{ e_R(S), e_B(S) \big\} \le \tbinom{|S|}{2}.
\end{equation}
Moreover, if \am{$e(S) =\frac{2}{3} { |S| \choose 2 }$}, then $G[S]$ is isomorphic to $H_m$.}
\end{claim}

\begin{proofc} \jm{Since $e(S) + \tfrac{1}{2} e(S) \leq e(S) + \max\big\{ e_R(S), e_B(S) \big\}$, the first inequality in (\ref{Seq}) is immediate.} \am{The second inequality in (\ref{Seq}) follows from the third instance of  Claim \ref{S1S2}, which states that} \jm{for any distinct $S_i, S_j$ we have
\begin{equation}\label{s1s2}
e(S_i, S_j) + \max\{e_R(S_i, S_j), e_B (S_i, S_j)\}\leq 9=|S_i|\cdot|S_j|,
\end{equation}
and from the fact that} \jm{for every $S_i$, $G[S_i]$ has exactly one edge in $R$ and one edge in $B$, and $2+1={3\choose 2}$.} \am{Finally, if $e(S) = \frac{2}{3}{ |S| \choose 2 }$ then $e(S_i,S_j)=6$ for any $i\neq j$. Thus, by the second instance of Claim \ref{S1S2}, $G[S_i \cup S_j]$ is isomorphic to $H_2$  for any distinct $S_i, S_j$. This gives $G[S]$ isomorphic to $H_m$.}
\end{proofc}

\am{For a given $x\in V$ we will use $N_B(x)$, and $N_R(x)$, to denote the set of blue neighbours of $x$, and the set of red neighbours of $x$, respectively.}

\begin{claim}\label{STvalues} \jm{$S\neq \emptyset$ and $|T|\geq 3$.}
\end{claim}

\begin{proofc} If $S = \emptyset$, then the assumption \am{that} $G$ is connected implies that one of $R$ \am{or} $B$ must be empty, and then the only way for $G$ to satisfy $|E| + \min\{|R|, |B| \} \ge { |V| \choose 2 }$ is for $G$ to be a monochromatic clique.  Therefore $m \ge 1$.

\am{Note that,  if $|T|\leq 2$ the only way for $G$ to satisfy Claim \ref{d23} is to have equalities in both (\ref{ST}) and (\ref{Seq}), that is
\begin{equation}\label{23}
e(S, T)=\tfrac{2}{3}|S||T| \mbox{ and } e(S)=\tfrac{2}{3}\tbinom{|S|}{2},
\end{equation}
and furthermore, if $|T|=2$, we must have $e(T)=1$. The second identity in (\ref{23})  implies that $G[S]$ is isomorphic to $H_m$ (by   Claim \ref{S}).  \mdv{So, we may write $V(G[S]) = X_0\sqcup X_1 \sqcup X_2$ as in Figure 1 where the edges incident with a vertex in $X_1$ are red and those incident with a vertex in $X_2$ are blue.}
\begin{itemize}
\item If $|T| = 0$ then $G=G[S]$ is isomorphic to $H_m$, a contradiction.
\item If $|T|=1$, say $T=\{v\}$. In this case we have $d(G)=\tfrac{2}{3}$, thus $|R|=|B|$ (by Claim \ref{d23}). Since $G[S]$ is isomorphic to $H_m$, and $e_R(H_m)=e_B(H_m)$ then we must have $e_R(v, S)=e_B(v, S)=m$. \mdv{First observe that $v$ cannot have a neighbour $x \in X_0$: If say $vx \in R$, then $N_B(v) \subset X_0 \setminus \{x\}$ is too small.  It follows from this and Claim \ref{C4} that $N_R(v)=X_1$ and $N_B(v)=X_2$, so $G$ is isomorphic to $H_m^+$, a contradiction.}
%
%
%
\item If $|T|=2$,  say $T=\{u,v\}$, and suppose without loss of generality that $uv\in R$. Since $G[S]$ is isomorphic to $H_m$, and $e_R(H_m)=e_B(H_m)$ then, in order  to satisfy $|E|+\min\{|R|, |B|\}\geq \tbinom{|V|}{2}$, it must be that $e_R(S,T)=e_B(S,T)=2m$. Thus,  there is a seagull $S_i$ with $e_B(T,S_i)\geq 2$. Recall that $e(u, S_i)=e(v, S_i)=2$. If $|N_B(x)\cap S_i|= 2$ for some $x\in\{u,v\}$ then there is at most one edge between $y$ and $S_i$, where $y\in \{u,v\}\setminus \{x\}$. Hence, $|N_B(x)\cap S_i|=|N_R(x)\cap S_i|= 1$ for both $x\in\{u,v\}$ which is impossible without creating either an alternating square or a non-monochromatic triangle.
\end{itemize}}
\end{proofc}

Now we have $S \neq \emptyset$ and $|T| \ge 3$ and we will show that this leads to a contradiction.  Note that the graph $G[T]$ must have all components monochromatic (by the maximality of our collection of seagulls).  It follows from \mdv{Claim \ref{d23}, Equation (\ref{ST}), and Claim \ref{S}} that $G[T]$ has density at least $\frac{2}{3}$.  \jm{Hence by Lemma \ref{thelemma}, $G[T]$ has a component $H$ with $|V(H)|>\tfrac{2}{3}|T|$.}  We shall assume (without loss) that $E(H) \subseteq R$.  Next we establish a key claim concerning blue edges between our seagulls and $V(H)$.

\begin{claim}\label{SH} For every $1 \le i \le m$ we have $e_B(S_i, V(H)) \le \alpha(H) + 1$.
\end{claim}

\begin{proofc}
To prove this claim, let $S_i = \{x,y,z\}$ be a seagull and assume that $xy \in R$ and $yz \in B$.
\am{Since} $G$ does not have an alternating cycle \am{(by Claim \ref{C4})} or a non-monochromatic triangle \am{(by assumption)} it follows that $N_B(x) \cap V(H)$ and $N_B(y) \cap V(H)$ are disjoint and moreover \am{$(N_B(x) \cup N_B(y)) \cap V(H)$} is independent (\jm{see the left image in Figure \ref{SeagullH2}}).  Similarly, $N_B(z) \cap V(H)$ is independent.  If $|(N_B(x) \cup N_B(y) ) \cap V(H)| \le 1$ the desired bound follows immediately, and we are similarly done if $|N_B(\mdv{z}) \cap V(H)| \le 1$.  So we may assume both of these sets have size at least two.  Choose $w \in N_B(z) \cap V(H)$ and (since $H$ is nontrivial) choose an edge $ww' \in E(H)$.  At most one of $w, w'$ can appear in $(N_B(x) \cup N_B(y) ) \cap V(H)$ since this set is independent.  Therefore, we may choose a vertex $u \in (N_B(x) \cup N_B(y) ) \cap V(H)$ with $u \neq w, w'$.  Now we have arrived at a contradiction as $\{z,w,w'\}$ and $\{x,y,u\}$ are both seagulls \jm{(see the right image in Figure \ref{SeagullH2})}.
\end{proofc}

\begin{figure}
  \centering
  \includegraphics[width=310pt]{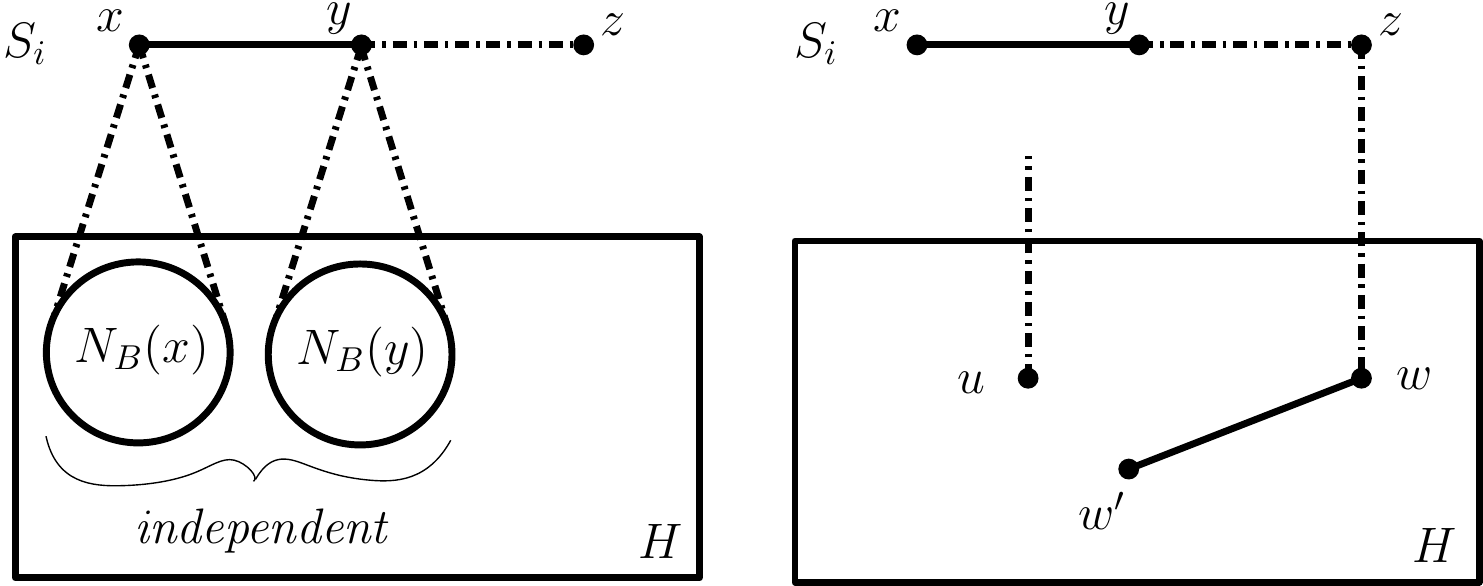}
  \caption{\jm{The situation under consideration in Claim \ref{SH}.}}
  \label{SeagullH2}
\end{figure}

\jm{In what follows, we use $|V(H)|=h$, $|T|=t$, and $|S|=s$.} For every seagull $S_i$, Claim 2 implies that $e(S_i,T) \le 2t$ and $e_B(S_i, T \setminus V(H)) \le e(S_i, T \setminus V(H)) \le 2(t-h)$.  Using these inequalities, together with Claim \ref{SH} and then Lemma \ref{thelemma}, we have
\begin{align}
e(S_i, T) + e_B(S_i,T)
	&=	e(S_i, T) + e_B(S_i, V(H)) + e_B(S_i, T \setminus V(H) )	\nonumber \\
	&\le	2t + \alpha(H) + 1 + 2(t-h)	\label{pre3t}
\\
	&\le	3t. \label{3t}
\end{align}
Summing this over all of our seagulls gives us $e(S,T) + e_B(S,T) \le st$.  Claim \ref{S} implies that $e(S) + e_B(S) \le { s \choose 2 }$.  Combining these with the assumption $|E| + |B| \ge { |V| \choose 2 }$ we deduce that
\begin{equation}
\label{Tdensity}
e(T) + e_B(T) \ge \mbox{${ t \choose 2}$}.
\end{equation}
\jm{Since $e(V(H), T\setminus V(H))=0$, we know that  $\tbinom{t}{2}-e(T) \geq h(t-h)$, and hence (\ref{Tdensity}) says that $e_B(T)\geq h(t-h)$. On the other hand, if $h\neq t$, then the fact that $h\geq t-h$ (by Lemma \ref{thelemma}) implies that $e_B(T)\leq \tbinom{t-h}{2} <(t-h)^2\leq h(t-h)$.} Therefore, we must have $V(H) = T$ and $e_B(T) = 0$.  Now the only way for inequality (\ref{Tdensity}) to hold is for $H$ to be a clique; \jm{in particular this means $\alpha(H)=1$}. Since $t \ge 3$ \jm{(by Claim \ref{STvalues}) we can return to (\ref{pre3t}) and improve upon (\ref{3t})}:
\[ e(S_i, T) + e_B(S_i, T) \le 2t + 2 < 3t. \]
\jm{Summing over all of our seagulls gives us $e(S,T) + e_B(S,T) < st$}.  Combining this with the bound $e(S) + e_B(S) \le {|S| \choose 2}$ (from  \jm{Claim (\ref{S})}) gives us the contradiction
$$|E| + |B| = e(S) + e_B(S) + e(S,T) + e_B(S,T) + \tbinom{t}{2} < \tbinom{ |V|}{2 }$$
and this completes the proof.
\end{proof}

\jm{\bibliographystyle{amsplain} \bibliography{non-mono-triangleBIB}}

\end{document}